\documentclass[10pt,reqno]{amsart}
\usepackage[colorlinks=true,allcolors=blue,backref=page]{hyperref}
\usepackage{color}
\usepackage{amsmath, amssymb, amsthm}

\usepackage{mathrsfs}
\usepackage{mathtools}
\usepackage[noabbrev,capitalize,nameinlink]{cleveref}
\crefname{equation}{}{}
\usepackage{fullpage}
\usepackage[noadjust]{cite}
\usepackage{graphics}
\usepackage{pifont}
\usepackage{tikz}
\usepackage{bbm}
\usepackage[T1]{fontenc}

\usetikzlibrary{arrows.meta}

\usepackage{environ}
\usepackage{framed}
\usepackage{url}
\usepackage[linesnumbered,ruled,vlined]{algorithm2e}
\usepackage[noend]{algpseudocode}
\usepackage[labelfont=bf]{caption}
\usepackage{cite}
\usepackage{framed}
\usepackage[framemethod=tikz]{mdframed}
\usepackage{appendix}
\usepackage{graphicx}
\usepackage[textsize=tiny]{todonotes}
\usepackage{tcolorbox}
\usepackage{enumerate}
\usepackage[shortlabels]{enumitem}
\allowdisplaybreaks[1]

\apptocmd{\sloppy}{\hbadness 10000\relax}{}{} 

\crefname{algocf}{Algorithm}{Algorithms}

\crefname{equation}{}{} 
\crefname{conjecture}{Conjecture}{Conjectures} 
\AtBeginEnvironment{appendices}{\crefalias{section}{appendix}} 

\crefformat{enumi}{#2#1#3}
\crefrangeformat{enumi}{#3#1#4 to~#5#2#6}
\crefmultiformat{enumi}{#2#1#3}%
{ and~#2#1#3}{, #2#1#3}{ and~#2#1#3}

\usepackage[color,final]{showkeys} 

\colorlet{refkey}{orange!20}
\colorlet{labelkey}{blue!30}

\crefname{algocf}{Algorithm}{Algorithms}

\usepackage{aliascnt}

\numberwithin{equation}{section}
\newtheorem{theorem}{Theorem}[section]
\crefname{theorem}{Theorem}{Theorems}
\Crefname{theorem}{Theorem}{Theorems}

\makeatletter
\newcommand{\newthmwithalias}[3]{%
  \newaliascnt{#1}{theorem}%
  \newtheorem{#1}[#1]{#2}%
  \aliascntresetthe{#1}%
  \expandafter\crefname\expandafter{#1}{#2}{#2s}%
  \expandafter\Crefname\expandafter{#1}{#2}{#2s}%
  \expandafter\def\csname #1autorefname\endcsname{#2}%
}
\makeatother

\newthmwithalias{proposition}{Proposition}{}
\newthmwithalias{lemma}{Lemma}{}
\newthmwithalias{claim}{Claim}{}
\newthmwithalias{corollary}{Corollary}{}
\newthmwithalias{conjecture}{Conjecture}{}
\newthmwithalias{fact}{Fact}{}
\newthmwithalias{definition}{Definition}{}
\newthmwithalias{problem}{Problem}{}
\newthmwithalias{question}{Question}{}
\newthmwithalias{assumption}{Assumption}{}
\newthmwithalias{example}{Example}{}
\newthmwithalias{setup}{Setup}{}

\theoremstyle{remark}
\newtheorem*{remark}{Remark}

\newtheorem*{question*}{Question}
\newtheorem*{definition*}{Definition}

\crefname{section}{Section}{Sections}
\Crefname{section}{Section}{Sections}
\crefname{subsection}{Section}{Sections}
\Crefname{subsection}{Section}{Sections}
\crefname{equation}{}{} 
\crefname{algocf}{Algorithm}{Algorithms}
\Crefname{algocf}{Algorithm}{Algorithms}


\newcommand{\mb}{\mathbb}
\newcommand{\mbf}{\mathbf}

\newcommand{\mc}{\mathcal}

\newcommand{\on}{\operatorname}

\newcommand{\imod}[1]{~\mathrm{mod}~#1}

\newcommand{\hide}[1]{}

\allowdisplaybreaks

\title{On random matrices with large corank}

\author[A1]{Zach Hunter}
\address{Department of Mathematics, ETH Z\"{u}rich, Z\"{u}rich, Switzerland.}
\email{zach.hunter@math.ethz.ch}

\author[A2]{Matthew Kwan}
\address{Institute of Science and Technology Austria (ISTA). Am Campus 1, 3400 Klosterneuburg, Austria}
\email{matthew.kwan@ist.ac.at}

\author[A3]{Lisa Sauermann}
\address{Institute for Applied Mathematics, University of Bonn, Germany}
\email{sauermann@iam.uni-bonn.de}

\author[A4]{Mehtaab Sawhney}
\address{Department of Mathematics, Columbia University, New York, NY 10027}
\email{m.sawhney@columbia.edu}

\begin{document}

\begin{abstract} 
Let $1\le k\le n$ and $M$ be a random $n\times n$ matrix with independent uniformly random $\{\pm 1\}$-entries. We show that there exists an absolute constant $c > 0$ such that 
\[\mbf{P}[\on{rank}(M)\le n-k]\le \exp(-c nk).\]
\end{abstract}

\maketitle

\section{Introduction}\label{sec:introduction}
The main result of this paper is the following bound for the probability that a random $\{\pm 1\}$-matrix has large corank. 
\begin{theorem}\label{thm:main}
There exists an absolute constant $c > 0$ such that the following holds. Take $1\le k\le n$, and let $M$ be a random $n\times n$ matrix with independent entries uniformly random in $\{\pm 1\}$. We have 
\[\mbf{P}[\on{rank}(M)\le n-k]\le \exp(-c nk).\]
\end{theorem}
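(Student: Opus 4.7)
The plan is to adapt Tikhomirov's approach for the singularity probability (the case $k=1$) to general $k$. The starting point is a column-based union bound: writing $M = [C_1 \mid C_2]$ with $C_1 \in \{\pm 1\}^{n \times (n-k)}$ and $C_2 \in \{\pm 1\}^{n \times k}$, and union-bounding over the $\binom{n}{k}$ choices of which $k$ columns are the ``dependent'' ones, one obtains
\[\mathbf{P}[\operatorname{rank}(M) \le n-k] \;\le\; \binom{n}{k}\, \mathbf{E}_{C_1}\!\left[ p(V(C_1))^k \right],\]
where $V(C_1)$ is the column span of $C_1$ and $p(V) := \mathbf{P}_c[c \in V \mid V]$ is the small-ball probability that an independent Rademacher column lies in $V$ (using the conditional independence of the columns of $C_2$ given $C_1$). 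Since $\binom{n}{k} \le e^n$ can be absorbed by adjusting the constant, the task reduces to proving $\mathbf{E}[p(V(C_1))^k] \le \exp(-c'nk)$ for some absolute $c'>0$.

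To analyze $p(V(C_1))$, I would identify $V(C_1)^\perp = \ker(C_1^T)$ with a generically $k$-dimensional subspace of $\mathbb{R}^n$ and choose any basis $W \in \mathbb{R}^{k \times n}$; then $p(V(C_1)) = \mathbf{P}_c[Wc = 0 \mid W]$ is the multidimensional L\'evy concentration of $W$. I would then partition the Grassmannian $\operatorname{Gr}_k(\mathbb{R}^n)$ into ``structured'' and ``unstructured'' regimes in analogy with Tikhomirov's compressible/incompressible dichotomy. In the unstructured regime, where the rows of $W$ have large least common denominator in the appropriate Rudelson--Vershynin sense, a multidimensional Hal\'asz-type inverse Littlewood--Offord estimate should yield $p(W) \le \exp(-c''n)$, hence $p(W)^k \le \exp(-c''nk)$ immediately.

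The main obstacle will be the structured regime, where $p(W)$ can be as large as $2^{-k}$ (Odlyzko's lemma, attained for instance when $W$ encodes constraints like $x_{2i-1} = x_{2i}$), so a pointwise bound on $p(W)$ is hopeless. Instead, one must show that the event ``$\ker(C_1^T)$ lies in the structured family'' has probability at most $\exp(-c'nk)$. The extra factor $k$ in the exponent $nk$ should be earned here because each of the $n-k$ random $\pm 1$ rows of $C_1$ imposes an independent anti-concentration constraint on a fixed structured $k$-plane, annihilating all but an exponentially small fraction. Making this intuition rigorous -- uniformly over all structural scales and across the $k$-parameter Grassmannian of candidate subspaces -- will require a delicate net-and-bootstrap argument extending Tikhomirov's inversion-of-randomness framework to the multidimensional setting, and I expect this to be the technical heart of the proof.
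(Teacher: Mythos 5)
Your reduction to bounding $\binom{n}{k}\,\mathbf{E}\big[p(V(C_1))^k\big]$ is fine (though for small $k$ you should use $\binom{n}{k}\le (en/k)^k$ rather than $e^n$: when $k=1$ one has $\mathbf{E}[p]\ge 2^{-n}$, so a loss of $e^n$ cannot be absorbed into the constant), and the unstructured regime is indeed unproblematic. But the proposal has a genuine gap exactly where you place ``the technical heart'': you never give an argument that the event ``$\ker(C_1^T)$ is structured'' has probability at most $\exp(-c'nk)$, uniformly over all structural scales. This is not a routine extension of the $k=1$ machinery. The obstruction is quantitative: a net over the relevant structured part of the Grassmannian $\operatorname{Gr}_k(\mathbb{R}^n)$ has cardinality exponential in $nk$ with a constant that must be beaten by the per-subspace probability gain, and controlling the interaction between approximation error on the net, the inverse Littlewood--Offord structure of $k$ simultaneous linear forms, and the dependence among the $n-k$ columns is precisely what has resisted the geometric approach for large $k$ --- Rudelson's argument along these lines only reaches $k\le\sqrt{n}$, and the present paper explicitly abandons this route because of that resistance. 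Saying that a ``delicate net-and-bootstrap argument'' should work is a research programme, not a proof; as written, the key estimate carrying the factor $k$ in the exponent is asserted, not established.

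For contrast, the paper avoids the structured/unstructured dichotomy altogether. It follows the Kahn--Koml\'os--Szemer\'edi swapping strategy: it splits the $(n-k)$-dimensional spans into ``thin'' ones (where $\rho_V\le(1-p)^{n/2}$, handled by a direct union/counting bound) and ``thick'' ones, and for the thick ones it compares the $\{\pm1\}$ columns with lazy columns drawn from $\mu_p^{\otimes n}$, using a double count of the event $\operatorname{span}(X_1,\dots,X_n,Y_1,\dots,Y_m)=V$. The engine is a new relative anticoncentration inequality (\cref{prop:relative}), valid for \emph{every} codimension-$k$ rational subspace, which gains a factor $\gamma^k$ with $\gamma<1$ when passing from $\mu_{1/2}$ to $\mu_p$; its proof is Fourier-analytic, with the one-dimensional Raikov doubling inequality replaced by a doubling inequality on $\mathbf{T}^k$ proved via compressions and Brunn--Minkowski (\cref{lem:frieman-check}). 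Because that inequality needs no unstructuredness hypothesis, no Grassmannian net or inversion-of-randomness framework is required --- which is exactly the step your plan leaves open.
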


For the sake of simplicity we have restricted to the case of $\{\pm 1\}$ entries, but using the techniques of Bourgain, Vu and Wood \cite{BVW10}, it is plausible that our proof can be adapted to allow the entries of $M$ to come from more flexible entry distributions. 

\subsection{History}
The singularity probability of random $\{\pm 1\}$-matrices has been intensively studied. After a series of works of Koml\'{o}s \cite{Kom67}, Kahn, Koml\'{o}s and Szemer\'{e}di \cite{KKS95}, Tao and Vu \cite{TV06,TV07}, and Bourgain, Vu and Wood \cite{BVW10}, celebrated work of Tikhomirov \cite{Tik20} established that 
\[\mbf{P}[\on{rank}(M)< n]\le \Big(\frac{1}{2} + o(1)\Big)^{n}.\]
The base of $1/2+o(1)$ is seen to be sharp by considering the probability that two rows match.

By considering the probability that $k+1$ rows match, it is natural to conjecture that in fact for all $k$ we have
\[\mbf{P}[\on{rank}(M)\le n-k] = \Big(\frac{1}{2}+o(1)\Big)^{nk}.\]
When $k$ is constant, this was verified by Jain, Sah and the final author \cite{JSS22} (building heavily on aforementioned work of Tikhomirov \cite{Tik20}). However, the methods in that work are not suitable when $k$ grows faster than say $\log n$. More recently, Rudelson~\cite{Rud24} managed to prove \cref{thm:main} for $k\le \sqrt{n}$, via an ingenious geometric argument.

\subsection{Proof ideas}As the proof is rather short, we give only brief comments. The previously-mentioned works \cite{Tik20}, \cite{JSS22} and \cite{Rud24} (along with a whole host of other works) fall broadly within the so-called ``geometric'' approach of Rudelson and Vershynin \cite{RV08}. Approaching \cref{thm:main} in this manner appears to provide substantial resistance. Therefore, our work essentially reverts to the previous strategy of Kahn, Koml\'{o}s and Szemer\'{e}di~\cite{KKS95}. In fact, in some ways our proof is even simpler than in \cite{KKS95}: as we may assume $k$ is larger than an absolute constant, we can avoid certain technicalities about ``structured subspaces'' which makes our argument more streamlined. 

The key new ingredient is a ``high-dimensional relative anticoncentration inequality'' (see \cref{prop:relative}), which provides a comparison between the probability that a random $\{\pm 1\}^{n}$-vector lies in a $(n-k)$-dimensional subspace, and the probability that a suitably ``lazy'' random vector lies in the same subspace (here ``lazy'' means that each component has some probability of being zero instead of $\pm1$). The key technical innovation is to win a factor exponential in $k$ (i.e., a factor of $\gamma^k$ for an absolute constant $\gamma<1$) when comparing the probabilities of these two events. We remark that earlier work of Kahn, Koml\'os and Szemer\'edi~\cite{KKS95} proceeds via a similar inequality for the case $k = 1$ (which in turns builds on techniques of Hal\`{a}sz \cite{Hal75}). Actually, this earlier inequality in some ways is quantitatively stronger than ours (it wins an arbitrarily large constant factor as the lazy random vector becomes more lazy), but it can only be effectively applied to subspaces which are in a certain sense ``unstructured''. A key simplifying feature of our proof is that we do not need to worry about such ``unstructuredness'' properties, and can work with completely arbitrary subspaces of codimension $k$.

The proof of \cref{prop:relative} is closely modeled on the work of Tao and Vu \cite{TV06}, however at a crucial juncture we need to replace an inequality\footnote{This inequality asserts that for any subsets $A,B\subseteq \mbf{T}$ of the circle $\mbf T=\mbf R/\mbf Z$, writing $\mu(\cdot)$ for the  Lebesgue measure, we have $\mu(A+B)\ge \min(1,\mu(A) + \mb(B))$. This inequality, and generalizations thereof, are also known under various other names in the literature (in particular, two well-known generalizations were made by Macbeath~\cite{Mac53} and Kneser~\cite{Kne56}).} due to Raikov \cite{Rai39} 
(which may be viewed as a continuous version of the Cauchy--Davenport inequality) with a suitable ``high-dimensional'' variant. This is achieved via a combination of pigeonholing and compression techniques to reduce to a case where the Brunn--Minkowski inequality in $\mbf{R}^d$ may be invoked. 
\subsection{Acknowledgements}
This work began when the authors were visiting Mathematisches Forschungsinstitut Oberwolfach, which provided ideal working conditions. MS thanks Vishesh Jain for initial discussions regarding the problem.

ZH was supported by SNSF grant 200021-228014. MK was supported by ERC Starting Grant ``RANDSTRUCT'' No.\ 101076777. LS was funded by the Deutsche Forschungsgemeinschaft (DFG,
German Research Foundation) -- CRC 1720 -- 539309657. This research was conducted during the period MS served as a Clay Research Fellow.

\section{Reduction to high-dimensional relative concentration result}
Our key technical ingredient is the following relative concentration result.
\begin{proposition}\label{prop:relative}
There exist absolute constants $\gamma = \gamma_{\ref{prop:relative}}<1$ and $k_{\ref{prop:relative}}\ge 1$ such that the following holds. Let $p \in (0,1/32]$, and let $\mu_{p}$ be the distribution which takes on values $1$ and $-1$ each with probability $p$, and the value $0$ with probability\footnote{Tao and Vu \cite{TV06} and Bourgain, Vu and Wood \cite{BVW10} use the slightly differing convention that $\mu_p$ is $0$ with probability $1-p$ and $\pm 1$ each with probability $p/2$.} $1-2p$. For some $k\ge k_{\ref{prop:relative}}$, let $V\subseteq \mbf{Q}^n$ be a linear subspace of dimension $\dim(V) = n-k$. Then for random vectors $X\sim \mu_{1/2}^{\otimes n}$ and $Y\sim \mu_{p}^{\otimes n}$, we have
\[\sup_{t\in \mbf{Q}^n}\mbf{P}[X + t\in V] \le \gamma^{k} \cdot \mbf{P}[Y\in V].\]
\end{proposition}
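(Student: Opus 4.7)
The plan is to follow the Fourier-analytic approach pioneered by Halász and developed by Tao--Vu \cite{TV06}. Clearing denominators, I would write $V = \ker A$ for an integer matrix $A \in \mbf{Z}^{k \times n}$ of rank $k$ with columns $a_1,\ldots,a_n$. Fourier inversion on $\mbf{Z}^k$ (combined with a standard Cauchy--Schwarz symmetrization to handle the $\pm 1$ values cleanly) expresses both $\sup_t \mbf{P}[X+t \in V]$ and $\mbf{P}[Y \in V]$ as integrals over $\mbf{T}^k$ of products of trigonometric factors of the shape $1 - \alpha(1-\cos(2\pi\langle a_i,\theta\rangle))$, with $\alpha$ a fixed constant on the $X$-side and $\alpha = 2p$ on the $Y$-side. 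Using the standard approximations $1 - \alpha(1-\cos(2\pi u)) \asymp \exp(-\Theta(\alpha)\|u\|^2)$ for small $\alpha$ (with $\|u\|$ the distance to $\mbf{Z}$, valid since $p \leq 1/32$), the proposition reduces to an inequality of the form
\[\int_{\mbf{T}^k} e^{-c_1 f(\theta)}\,d\theta \leq \gamma^k \int_{\mbf{T}^k} e^{-C_2 p f(\theta)}\,d\theta, \qquad f(\theta) := \sum_{i=1}^n \|\langle a_i,\theta\rangle\|^2.\]

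The heart of the argument is then to compare the measures of the sublevel sets $L_R := \{\theta \in \mbf{T}^k : f(\theta) \leq R\}$. Writing both integrals in layer-cake form, it suffices to establish a bound of the shape $\mathrm{vol}(L_{R/p}) \geq (c/p)^{k/2} \mathrm{vol}(L_R)$ for every $R > 0$. The crucial input is the set-theoretic inclusion
\[\underbrace{L_R + L_R + \cdots + L_R}_{m \text{ copies}} \subseteq L_{m^2 R},\]
valid for any integer $m \geq 1$ by the triangle inequality for $\|\cdot\|$ together with Cauchy--Schwarz, so that the choice $m \approx p^{-1/2}$ lands at the level $L_{R/p}$. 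In the case $k = 1$, Tao--Vu extract a linear volume gain from this inclusion via Raikov's theorem (i.e., $\mathrm{vol}(mA) \gtrsim \min(1, m\mathrm{vol}(A))$). For $k \geq 2$, the natural replacement is the Brunn--Minkowski inequality on $\mbf{R}^k$, which upgrades the gain to $m^k = p^{-k/2}$---precisely the exponential-in-$k$ factor demanded by the proposition.

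The main technical obstacle, and the principal novelty, is that $L_R$ lives on the torus $\mbf{T}^k$, whereas Brunn--Minkowski is a Euclidean statement about convex bodies. Note however that $f$ is piecewise quadratic: on each ``branch piece'' of the torus (determined by which integer is closest to $\langle a_i,\theta\rangle$ for each $i$), $f$ is a single nonnegative quadratic form, so $L_R$ locally looks like an ellipsoidal neighborhood of a point of the ``dual lattice'' $\{\theta \in \mbf{T}^k : A^\top \theta \in \mbf{Z}^n\}$. To actually invoke Brunn--Minkowski I would first pigeonhole over the discrete branch choices (and the base lattice) to isolate a convex ellipsoidal slice of $L_R$ sitting inside a small Euclidean chart of the torus; then apply a compression / rearrangement step to replace this slice by a convex set of the same volume in a regime where the $m$-fold Minkowski sum does not wrap around; and finally invoke Brunn--Minkowski in $\mbf{R}^k$ to obtain the desired $p^{-k/2}$ volume gain. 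The hypothesis $k \geq k_{\ref{prop:relative}}$ absorbs the absolute constants incurred by the pigeonholing and compression steps into the single clean factor $\gamma^k$, and the restriction $p \leq 1/32$ provides enough slack to ensure $\gamma < 1$.
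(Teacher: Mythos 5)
Your high-level architecture is the same as the paper's (Fourier inversion over $\mbf{T}^k$, a level-set/sumset comparison, and replacing Raikov's one-dimensional doubling by Brunn--Minkowski in $\mbf{R}^k$ via pigeonholing and compression), but two of your steps are wrong or incomplete as written. First, the reduction: there is no constant $\alpha>0$ with $|\cos(2\pi u)|\le 1-\alpha(1-\cos(2\pi u))$, since the left-hand side equals $1$ at half-integer $u$ while the right-hand side equals $1-2\alpha$ there. The $\pm1$ walk's characteristic function is large on $\frac12\mbf{Z}$, not just on $\mbf{Z}$ (the usual symmetrization produces the \emph{doubled} frequency $\cos(4\pi\langle a_i,\theta\rangle)$), so your surrogate $f(\theta)=\sum_i\|\langle a_i,\theta\rangle\|^2$ does not dominate the $X$-side integrand pointwise; this mod-$2$ mismatch is exactly what the paper's angle-doubling inequality (\cref{lem:trig-iden}) together with the substitution $\theta\mapsto\theta/2$ is built to handle. (Your reduction can be rescued, since the doubling map on $\mbf{T}^k$ is measure preserving, but this has to be said.) Second, your sublevel-set claim $\mu(L_{R/p})\ge (c/p)^{k/2}\mu(L_R)$ cannot hold ``for every $R>0$'': volumes are at most $1$, so it fails as soon as $\mu(L_R)>(p/c)^{k/2}$ (e.g.\ once $L_R=\mbf{T}^k$). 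One must truncate at height $e^{-\Theta(k)}$ and treat the truncated part by a separate pointwise comparison against the $Y$-side integrand; this is precisely the role of the parameter $\tau$ and of \cref{lem:trig-iden} with $\varphi'=0$ in the paper, and it is absent from your outline.

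The more serious gap is the geometric core. Pigeonholing ``over the discrete branch choices (and the base lattice)'' to isolate a piece of $L_R$ inside a single Euclidean chart is unaffordable: the number of branch cells, and the order of the finite group $\{\theta\in\mbf{T}^k:\langle a_i,\theta\rangle\in\mbf{Z}\text{ for all }i\}$, are governed by the sizes of the integer entries of $A$, over which there is no control (clearing denominators can make them arbitrarily large compared to $n$ and $k$); in the extremal case $L_R$ splits into a huge number of equally small pieces around lattice points, so passing to one piece loses a factor far exceeding the $m^k$ Brunn--Minkowski gain. The paper avoids this by first normalizing $L$ so that its first $k$ columns form $ZI_k$; then membership in a superlevel set of height $e^{-\beta k}$ forces all but $O(\beta k)$ coordinates to satisfy $Z\theta_i\in\mbf{Z}+[-1/4,1/4]$, and one pigeonholes only over the at most $(3/2)^k$ possible sets of exceptional coordinates. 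Even after that, the relevant set lies in a product $A_1\times\cdots\times A_k$ with $\mu(A_i)\le 1/2$ where each $A_i$ is a union of roughly $|Z|$ arcs---emphatically not one chart---and the doubling bound $\mu(S+S)\ge 2^k\mu(S)$ for such sets is exactly the content of \cref{lem:frieman-check}, proved via coordinate compressions whose key property is that they preserve measure while not increasing the measure of the sumset, after which the set sits in $[0,1/2]^k$ and Brunn--Minkowski applies to general compact sets (convexity is irrelevant). Your proposed ``replace this slice by a convex set of the same volume'' is not a legitimate move unless you prove such a sumset-monotonicity property for the rearrangement, and your $m\approx p^{-1/2}$-fold sum would additionally require per-coordinate measure at most $1/m$ to prevent wraparound; since only some fixed $\gamma<1$ is needed, the paper's choice $m=2$ (net gain $(4/3)^k$ after the pigeonhole loss) already suffices.
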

\begin{remark}
Note that $\mu_{1/2}$ is the uniform distribution on $\{\pm 1\}$. We only need the case where $t = \vec{0}$ in our proof of \cref{thm:main}. Also, we note that the condition $k\ge k_{\ref{prop:relative}}$ can likely be removed at the cost of slightly complicating the proof.

It would be of interest to understand the quantitative behaviour of $\gamma$ in \cref{prop:relative} when taking $p\to 0$. By considering $V = \{\vec{0}\}\subseteq \mbf{Q}^n$ we see that $\gamma\ge 1/2$ for every $p$; it appears plausible that $\gamma \to 1/2$ as $p\to 0$.
\end{remark}

We postpone the proof of \cref{prop:relative} to the next section and first proceed with the proof of \cref{thm:main}. The proof is closely modeled after the proof of Kahn, Koml\'os and Szemer\'edi \cite{KKS95} (following the presentation of Tao and Vu \cite{TV06}).

\begin{proof}[Proof of \cref{thm:main}]
We may assume that $k$ is sufficiently large (larger than any absolute constant), recalling that $\mbf{P}[\on{rank}(M) < n]\le \exp(-c'n)$ for an absolute constant $c'>0$ by the work of \cite{KKS95}.

We fix $p = 1/32$, and let $\gamma<1$ be an absolute constant as in \cref{prop:relative}. Now, fix $c>0$ such that $e^{-4c}>\max(1-2p,\gamma)$.

Let $\mc{V}$ denote the set of $(n-k)$-dimensional linear subspaces $V\subseteq \mbf{Q}^n$. Consider independent random vectors $X_1,\dots,X_n\sim \mu_{1/2}^{\otimes n}$, and note that (since the columns of $M$ also have the same distribution $\mu_{1/2}^{\otimes n}$)
\[\mbf{P}[\on{rank}(M)= n-k]=\mbf{P}[\on{span}(X_1,\ldots,X_n)\in  \mc{V}].\]
For any $V\in \mc{V}$, we define
\[\rho_V = \mbf{P}_{X\sim \mu_{1/2}^{\otimes n}}[X\in V].\]

Furthermore, define a subspace $V\in \mc{V}$ to be \emph{thin} if $\rho_V\le (1-p)^{n/2}$ and to be \emph{thick} if $\rho_V> (1-p)^{n/2}$. Denoting by $\mc{V}_{\on{thin}}$ and $\mc{V}_{\on{thick}}$ the sets of thin and thick subspaces, respectively, we obtain a partition $\mc{V} = \mc{V}_{\on{thin}} \sqcup \mc{V}_{\on{thick}}$. We bound the probability that $\on{span}(X_1,\ldots,X_n)$ belongs to $\mc{V}_{\on{thin}}$ and to $\mc{V}_{\on{thick}}$, respectively:

\begin{claim}\label{clm:thin}
We have that 
\[\mbf{P}[\on{span}(X_1,\ldots,X_n)\in  \mc{V}_{\on{thin}}]\le 2^n \cdot (1-p)^{nk/2}.\]
\end{claim}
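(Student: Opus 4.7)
The plan is to follow the standard union-bound strategy of Kahn--Komlós--Szemerédi (see also \cite{TV06}): we dominate the event $\{\on{span}(X_1,\ldots,X_n) \in \mc{V}_{\on{thin}}\}$ by a union over the $\binom{n}{n-k}$ possible subsets $S \subseteq [n]$ of size $n-k$ picking out which of the $X_i$ form a basis for the span. For each fixed $S$, the vectors $\{X_i\}_{i\in S}$ determine the (thin) subspace $V$, and then each of the remaining $k$ vectors lies in $V$ independently with probability at most $(1-p)^{n/2}$, for a total factor of $(1-p)^{nk/2}$.

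More precisely, for each $S \subseteq [n]$ with $|S| = n-k$, let $A_S$ denote the event that $\{X_i\}_{i\in S}$ is linearly independent, its span $V_S$ lies in $\mc{V}_{\on{thin}}$, and $X_j \in V_S$ for every $j \notin S$. If $\on{span}(X_1,\ldots,X_n) \in \mc{V}_{\on{thin}}$ then the span has dimension exactly $n-k$, so some $n-k$ of the $X_i$ form a basis of it, and the corresponding $A_S$ occurs. By the union bound together with symmetry (all $A_S$ have the same probability),
\[
\mbf{P}[\on{span}(X_1,\ldots,X_n) \in \mc{V}_{\on{thin}}] \le \binom{n}{n-k}\, \mbf{P}[A_{S_0}],
\]
where $S_0 := \{1,\ldots,n-k\}$.

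To bound $\mbf{P}[A_{S_0}]$, condition on $X_1,\ldots,X_{n-k}$; then $V := V_{S_0}$ is determined, while $X_{n-k+1},\ldots,X_n$ remain independent and $\mu_{1/2}^{\otimes n}$-distributed. Hence the conditional probability that each of these $k$ vectors lies in $V$ is exactly $\rho_V$, and so
\[
\mbf{P}[A_{S_0} \mid X_1,\ldots,X_{n-k}] = \one[V \in \mc{V}_{\on{thin}}]\cdot \rho_V^{k} \le (1-p)^{nk/2}
\]
by the definition of thinness. Taking expectations and combining with the trivial estimate $\binom{n}{n-k} = \binom{n}{k} \le 2^n$ yields the claim.

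There is no real obstacle here: this is the easy half of the Kahn--Komlós--Szemerédi framework, intentionally arranged so that the nontrivial relative anticoncentration input (\cref{prop:relative}) is only needed for the complementary ``thick'' case. The one point worth being careful about is that the union-bound step is legitimate, but this is immediate from the basic linear-algebra fact that any spanning set contains a basis.
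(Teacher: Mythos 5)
Your proposal is correct and follows essentially the same argument as the paper: both reduce to a fixed index set $S_0=\{1,\dots,n-k\}$ via symmetry (costing the factor $\binom{n}{n-k}\le 2^n$), and then use that the remaining $k$ vectors each lie in the spanned thin subspace with probability $\rho_V\le (1-p)^{n/2}$. The only cosmetic difference is that the paper sums over $V\in\mc{V}_{\on{thin}}$ while you condition on $X_1,\dots,X_{n-k}$, which amounts to the same computation.
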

\begin{claim}\label{clm:thick}
We have that 
\[\mbf{P}[\on{span}(X_1,\ldots,X_{n}) \in \mc{V}_{\on{thick}}] \le n2^{2n} \cdot\gamma^{nk/2}.\]
\end{claim}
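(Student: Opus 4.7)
The plan is to decompose the event via a disjoint union over thick subspaces, apply \cref{prop:relative} to gain the factor $\gamma^{nk/2}$, and then bound the resulting sum via an expected count of thick subspaces containing a hybrid random vector configuration.

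Since the events $\{\on{span}(X_1,\ldots,X_n) = V\}$ are disjoint as $V$ ranges over $\mc{V}_{\on{thick}}$, and since $\on{span}(X_1,\ldots,X_n) = V$ implies all columns lie in $V$, I first obtain
\[
\mbf{P}[\on{span}(X_1,\ldots,X_n) \in \mc{V}_{\on{thick}}] = \sum_{V \in \mc{V}_{\on{thick}}} \mbf{P}[\on{span}(X_1,\ldots,X_n) = V] \leq \sum_{V\in\mc{V}_{\on{thick}}} \rho_V^n.
\]
For each thick $V$, I apply \cref{prop:relative} (with $t=\vec{0}$) to extract a gain over $n/2$ columns: writing $\sigma_V := \mbf{P}_{Y \sim \mu_p^{\otimes n}}[Y \in V]$, the inequality $\rho_V \le \gamma^{k}\sigma_V$ yields $\rho_V^{n/2} \le \gamma^{nk/2}\sigma_V^{n/2}$, and hence
\[\rho_V^n \le \gamma^{nk/2}\,\sigma_V^{n/2}\rho_V^{n/2}.\]
The product $\sigma_V^{n/2}\rho_V^{n/2}$ equals $\mbf{P}[Y_1,\ldots,Y_{n/2},X_1',\ldots,X_{n/2}' \in V]$ for independent $Y_j\sim\mu_{p}^{\otimes n}$ and $X_i'\sim\mu_{1/2}^{\otimes n}$. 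Summing over thick $V$ and interchanging with the expectation,
\[\sum_{V\in\mc{V}_{\on{thick}}}\rho_V^n \le \gamma^{nk/2}\,\mbf{E}\Bigl[\#\bigl\{V\in\mc{V}_{\on{thick}}:Y_1,\ldots,Y_{n/2},X_1',\ldots,X_{n/2}' \in V\bigr\}\Bigr].\]

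The remaining task is to bound this expected count of thick $(n-k)$-dimensional subspaces containing all $n$ hybrid random vectors by $n\cdot 2^{2n}$; this step I anticipate to be the main obstacle. When the $n$ hybrid vectors span a subspace of dimension at least $n-k$, the count is at most one (it is one precisely when the span is exactly a thick $(n-k)$-dimensional subspace, and zero otherwise). The delicate case is when the $n$ hybrid vectors span a subspace of dimension strictly less than $n-k$: then many thick $V$ may contain the span, and I would need a combinatorial estimate exploiting the thickness constraint $\rho_V > (1-p)^{n/2}$. Plausibly the target factor $n\cdot 2^{2n}$ arises by counting ordered pairs of $\{\pm1\}$-vectors (contributing the $2^{2n}$) inside each thick $V$, coupled with a union bound over the (small) dimension of the degenerate spanned subspace (contributing the factor $n$). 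Once this counting bound is established, combining the displays above gives exactly $\mbf{P}[\on{span}(X_1,\ldots,X_n) \in \mc{V}_{\on{thick}}] \le n\cdot 2^{2n}\cdot \gamma^{nk/2}$, as required.
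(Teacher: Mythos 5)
There is a genuine gap, and it is exactly at the step you flagged. The expected count $\mbf{E}\,\#\{V\in\mc{V}_{\on{thick}}:Y_1,\ldots,Y_{n/2},X_1',\ldots,X_{n/2}'\in V\}$ cannot be bounded by $n\cdot 2^{2n}$ --- it can in fact be infinite. The reason is that a thick subspace need not be spanned by its $\{\pm1\}^n$-points: if $W$ is a subspace of dimension $<n-k$ containing at least $(1-p)^{n/2}2^n$ cube points (for example, for $k$ somewhat smaller than $0.02n$, take $W$ to be spanned by the cube points whose first $k+2$ coordinates are all equal; this span has dimension $n-k-1$), then \emph{every} $(n-k)$-dimensional $V\subseteq\mbf{Q}^n$ containing $W$ is thick, and over $\mbf{Q}$ there are infinitely many such $V$. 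With positive probability all of your hybrid vectors lie in $W$ (each $X_i'$ has its first $k+2$ coordinates equal with positive probability, and likewise each $Y_j$), and on that event the count is infinite. The root cause is the very first inequality $\mbf{P}[\on{span}(X_1,\ldots,X_n)=V]\le\rho_V^n$: replacing the exact-span event by ``all columns lie in $V$'' destroys the disjointness in $V$ that makes $\sum_V\mbf{P}[\on{span}(\cdots)=V]\le 1$, and no counting of $\{\pm1\}$-vectors inside thick subspaces can recover it, since distinct thick subspaces can share the same intersection with $\{\pm1\}^n$.

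The paper avoids this by the Kahn--Koml\'os--Szemer\'edi swapping argument, which keeps exact-span events throughout. One compares $\mbf{P}[\on{span}(X_1,\ldots,X_n)=V]$ with $\mbf{P}[\on{span}(X_1,\ldots,X_n,Y_1,\ldots,Y_m)=V]$ for $m=\lceil n/2\rceil$: from below this is at least $\gamma^{-km}\rho_V^m\,\mbf{P}[\on{span}(X_1,\ldots,X_n)=V]$ by \cref{prop:relative}, and from above one selects a basis consisting of $r$ of the $Y$'s and $n-k-r$ of the $X$'s (symmetry gives the binomial factors absorbed into $2^{2n}$), bounds the remaining $Y$'s by the weighted Odlyzko lemma and the remaining $X$'s by $\rho_V$, and uses thickness to absorb $(1-p)^{(n-r)(m-r)}\le\rho_V^{m-r}$. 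The $\rho_V^m$ factors cancel, and summing the disjoint events $\{\on{span}(Y_1,\ldots,Y_r,X_1,\ldots,X_{n-k-r})=V\}$ over $V\in\mc{V}_{\on{thick}}$ contributes at most $1$ for each of the $m+1\le n$ values of $r$ --- that is where the factor $n$ comes from, not from the dimension of a degenerate span. Your use of \cref{prop:relative} to win $\gamma^{nk/2}$ over $n/2$ coordinates is the right instinct, but it must be wired into this exact-span comparison rather than into a first-moment count over $\mc{V}_{\on{thick}}$.
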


Combining these two claims, we obtain
\begin{align*}\mbf{P}[\on{rank}(M)= n-k]&=\mbf{P}[\on{span}(X_1,\ldots,X_n)\in  \mc{V}_{\on{thin}}]+\mbf{P}[\on{span}(X_1,\ldots,X_n)\in  \mc{V}_{\on{thick}}]\\
&\le 2^n \cdot(1-2p)^{nk/2}+n2^{2n} \cdot\gamma^{nk/2}\le n2^{2n+1}. \exp(-2cnk)
\end{align*}
Summing this for all $k'=k,k+1,\dots,n$, we can conclude that (using that $k$ is sufficiently large with respect to $c$)
\[\mbf{P}[\on{rank}(M)\le n-k]= \sum_{k'=k}^{n}\mbf{P}[\on{rank}(M)= n-k']\le n^2 2^{2n+1} \cdot\exp(-2cnk)\le \exp(-cnk)\qedhere\]
\end{proof}

It remains to prove the two claims. We start with \cref{clm:thin} handling thin subspaces. 
\begin{proof}[{Proof of \cref{clm:thin}}]
Note that whenever $\on{span}(X_1,\ldots,X_n)=V$ for some $V\in \mc{V}_{\on{thin}}$, we can find a subset $S\subseteq [n]$ of size $|S|=n-k$ such that the vectors $X_s$ for $s\in S$ form a basis of $V$. In other words, we have $\on{span}((X_s)_{s\in S})=V$ and $X_i\in V$ for all $i\in [n]\setminus S$. By symmetry, the probability for this happening is the same for all subsets $S\subseteq [n]$ of size $|S|=n-k$, and so we can conclude
\begin{align*}
\mbf{P}[\on{span}(X_1,\ldots,X_n)\in  \mc{V}_{\on{thin}}]&=\sum_{V\in \mc{V}_{\on{thin}}}\mbf{P}[\on{span}(X_1,\ldots,X_n)=V]\\
&\le \binom{n}{n-k}\sum_{V\in \mc{V}_{\on{thin}}}\mbf{P}[\on{span}(X_1,\ldots,X_{n-k})=V \text{ and }X_{n-k+1},\dots,X_n\in V]\\
&\le 2^n\sum_{V\in \mc{V}_{\on{thin}}}\mbf{P}[\on{span}(X_1,\ldots,X_{n-k})=V] \cdot \rho_V^k\\
&\le 2^n\cdot (1-p)^{nk/2}\sum_{V\in \mc{V}_{\on{thin}}}\mbf{P}[\on{span}(X_1,\ldots,X_{n-k})=V]\le 2^n\cdot (1-p)^{nk/2}.\qedhere
\end{align*}
\end{proof}

We now prove  \cref{clm:thick} about thick subspaces using \cref{prop:relative}.
\begin{proof}[{Proof of \cref{clm:thick}}]
Let $m = \lceil n/2\rceil$ and consider independent random vectors $Y_1,\ldots, Y_m \sim \mu_{p}^{\otimes n}$ (also independent from $X_1,\dots,X_n$). For any $V\in \mc{V}_{\on{thick}}$ and any $i\in [m]$, by \cref{prop:relative} we have that
\[\mbf{P}[Y_i\in V]\ge \gamma^{-k} \rho_V.\]
Therefore we obtain
\begin{align}
\mbf{P}[\on{span}(X_1,\ldots, X_n,Y_1,\ldots, Y_m) = V] &\ge \mbf{P}[\on{span}(X_1,\ldots, X_n) = V \text{ and } Y_1,\ldots, Y_m\in V]\notag \\
&\ge \gamma^{-km}\rho_V^{m}\cdot \mbf{P}[\on{span}(X_1,\ldots, X_n) = V]\label{eq:eq-for-claim-thick}
\end{align}
for any $V\in \mc{V}_{\on{thick}}$. Note that whenever $\on{span}(X_1,\ldots, X_n,Y_1,\ldots, Y_m) = V$, there exist subsets $J\subseteq [m]$ and $I\subseteq [n]$ with $|I|+|J| = n-k$ such that $\on{span}((X_i)_{i\in I}, (Y_j)_{j\in J}) = V$ and $Y_s\in \on{span}((Y_j)_{j\in J})$ for all $s\in [m]\setminus J$  (and $X_s\in V$ for all $s\in [n]\setminus I$). Indeed, one can take $(Y_j)_{j\in J}$ to be a basis of $\on{span}(Y_1,\ldots, Y_m)$ and then extend to a basis of $\on{span}(X_1,\ldots, X_n,Y_1,\ldots, Y_m)=V$. Therefore, using symmetry, we conclude that 
\begin{align*}
\mbf{P}&[\on{span}(X_1,\ldots, X_n,Y_1,\ldots, Y_m) = V]\\
&\le \sum_{r=0}^{m}\binom{m}{r}\binom{n}{n-k-r} \cdot \mbf{P}[\on{span}(Y_1,\ldots,Y_r,X_1,\ldots,X_{n-k-r}) = V \text{ and } Y_{r+1},\dots,Y_m\in \on{span}(Y_1,\ldots,Y_r)\\
&\qquad\qquad\qquad\qquad\qquad\qquad\qquad\qquad\qquad\qquad\qquad\qquad\qquad\qquad\qquad\qquad\qquad\text{ and }X_{n-k-r+1},\dots,X_n\in V]\\
&\le \sum_{r=0}^{m}2^{m}2^n \cdot \mbf{P}[\on{span}(Y_1,\ldots,Y_r,X_1,\ldots,X_{n-k-r}) = V]\cdot (1-p)^{(n-r)(m-r)} \cdot \rho_V^{k+r}. \end{align*}
Here, we used that for any given outcomes of $Y_1,\dots,Y_r$ we have $\mbf{P}[Y_i\in \on{span}(Y_1,\ldots,Y_r)\mid Y_1,\dots,Y_r]\le (1-p)^{n-r}$ for each $i=r+1,\dots,m$, by the weighted Odlyzko lemma (see \cite[Lemma~4.3]{TV07}). Indeed, this lemma shows that $\mbf{P}_{Y\sim \mu_{p}^{\otimes n}}[Y\in W]\le (1-2p)^{n-\dim W}\le (1-p)^{n-\dim W}$ for any linear subspace $W\subseteq \mbf{Q}^{n}$.

Furthermore, observing that $(1-p)^{(n-r)(m-r)}\le \rho_V^{m-r}$ for any $r=0,\dots,m$ and any $V\in \mc{V}_{\on{thick}}$ (for $r=m$ this holds trivially, and for $r\le m-1\le n/2$ we can observe that  $(1-p)^{n-r}\le (1-p)^{n/2}\le \rho_V$), we obtain
\begin{align*}
\mbf{P}[\on{span}(X_1,\ldots, X_n,Y_1,\ldots, Y_m) = V]&\le \sum_{r=0}^{m}2^{2n} \mbf{P}[\on{span}(Y_1,\ldots,Y_r,X_1,\ldots,X_{n-k-r}) = V]\cdot \rho_V^{m-r} \cdot \rho_V^{k+r} \\
&\le 2^{2n}\cdot \rho_V^{m} \cdot\sum_{r=0}^{m}\mbf{P}[\on{span}(Y_1,\ldots,Y_r,X_1,\ldots,X_{n-k-r}) = V].
\end{align*}
Combining this with \cref{eq:eq-for-claim-thick}, we can conclude
\[\mbf{P}[\on{span}(X_1,\ldots, X_n) = V]\le 2^{2n}\cdot \gamma^{km} \cdot\sum_{r=0}^{m}\mbf{P}[\on{span}(Y_1,\ldots,Y_r,X_1,\ldots,X_{n-k-r}) = V]\]
for every $V\in \mc{V}_{\on{thick}}$. Summing this for all $V\in \mc{V}_{\on{thick}}$ gives
\begin{align*}
\mbf{P}[\on{span}(X_1,\ldots, X_n) \in \mc{V}_{\on{thick}}]&=\sum_{V\in \mc{V}_{\on{thick}}}\mbf{P}[\on{span}(X_1,\ldots, X_n) = V]\\
&\le 2^{2n}\cdot \gamma^{km} \cdot\sum_{r=0}^{m}\sum_{V\in \mc{V}_{\on{thick}}}\mbf{P}[\on{span}(Y_1,\ldots,Y_r,X_1,\ldots,X_{n-k-r}) = V]\\
&\le 2^{2n}\cdot \gamma^{km} \cdot\sum_{r=0}^{m} 1\le n2^{2n}\cdot \gamma^{km}\le n2^{2n}\cdot \gamma^{nk/2}.\qedhere
\end{align*}
\end{proof}

\section{Proof of \texorpdfstring{\cref{prop:relative}}{}}
It remains to prove \cref{prop:relative}. The proof is modeled after the Fourier comparison argument in \cite{TV06}; the crucial trick is replacing a certain ``doubling'' inequality of Raikov \cite{Rai39} (which morally is the continuous analogue of the Cauchy--Davenport theorem, a \textit{one-dimensional} result about additive doubling) with a suitable ``high-dimensional'' variant. More precisely, we will use the following ``doubling'' property for subsets of the torus $\mbf{T}^{k}:= (\mbf{R}/\mbf{Z})^k$ satisfying an appropriate coordinate restriction. Throughout the remainder of the paper, we will use $\mu(\cdot)$ to denote Lebesgue measure on $\mbf{T}$, $\mbf{T}^d$ and $\mbf{R}^d$.

\begin{lemma}\label{lem:frieman-check}
Let $A_1,\dots,A_k\subseteq\mbf{T}$ be closed subsets with $\mu(A_i)\le 1/2$ for $i=1,\dots,k$. Then for any closed set $S\subseteq A_1\times \cdots \times A_k\subseteq \mbf{T}^{k}$, we have
\[\mu(S+S)\ge 2^{k} \cdot \mu(S).\]
\end{lemma}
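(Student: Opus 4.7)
My overall strategy is to reduce the statement to the Euclidean Brunn--Minkowski inequality through a combination of approximation, partitioning, and lifting. First, by inner regularity of Lebesgue measure, I would approximate each $A_i$ by a finite union of closed arcs and $S$ by a finite union of axis-aligned closed boxes. The goal is then to exhibit $S$ as the image of a subset $\wt S \subseteq \mbf R^k$ in such a way that $\wt S + \wt S$ projects bijectively (up to null sets) onto $S + S \subseteq \mbf T^k$; Brunn--Minkowski in $\mbf R^k$ would then immediately give $\mu(S + S)_{\mbf T^k} = \mu(\wt S + \wt S)_{\mbf R^k} \geq 2^k \mu(\wt S) = 2^k \mu(S)$.

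In the simplest case, when each $A_i$ lies (after a translation) in a half-arc $[a_i, a_i + 1/2] \subseteq \mbf T$, the lift $\wt S \subseteq \prod_i [a_i, a_i + 1/2]$ works directly, as $\wt S + \wt S$ sits in a box of side $1$ in $\mbf R^k$ and so projects bijectively onto $\mbf T^k$. However, $A_i$ need not fit in a half-arc in general (e.g., $A_i = [0, 1/4] \cup [1/2, 3/4]$), and to handle this I would split each $A_i$ into its intersections with the two standard half-arcs $[0, 1/2]$ and $[1/2, 1]$. This induces a partition $S = \bigsqcup_{\epsilon \in \{0,1\}^k} S_\epsilon$ with $S_\epsilon \subseteq A_1^{\epsilon_1} \times \cdots \times A_k^{\epsilon_k}$, and applying the two-set Brunn--Minkowski in $\mbf R^k$ to the natural lifts yields
\[\mu(S_\epsilon + S_{\epsilon'})_{\mbf T^k} \geq \bigl(\mu(S_\epsilon)^{1/k} + \mu(S_{\epsilon'})^{1/k}\bigr)^k\]
for each pair $(\epsilon, \epsilon') \in \{0,1\}^k \times \{0,1\}^k$. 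These $4^k$ bounds must then be combined into a single lower bound on $\mu(S + S)$.

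\textbf{Main obstacle.} The combination step is the principal difficulty. A naive pigeonhole---taking only the largest $S_\epsilon$---gives $\mu(S + S) \geq \mu(S)$, off by exactly the desired factor of $2^k$. To close this gap, one must sum contributions across the $2^k$ ``parity classes'' $\sigma = \epsilon + \epsilon' \pmod 2 \in \{0,1\}^k$, carefully tracking the positions of the sumsets $S_\epsilon + S_{\epsilon'}$ in $\mbf T^k$: sumsets within a given parity class are translates of each other by elements of the subgroup $\{0, 1/2\}^k \leq \mbf T^k$, while sumsets from different parity classes live in distinct cosets of this subgroup. Exploiting this coset structure to control overlaps is what the ``pigeonholing and compression'' alluded to in the introduction achieves, and it constitutes the technical heart of the proof.
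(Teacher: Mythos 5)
Your reduction to pairwise Brunn--Minkowski bounds is fine as far as it goes, but the proposal stops exactly at the step that carries all the difficulty, and the structural claim you offer for closing it is false. Splitting each $A_i$ into $A_i^0=A_i\cap[0,1/2]$ and $A_i^1=A_i\cap[1/2,1]$, the coordinate sumsets $I_{\epsilon_i}+I_{\epsilon_i'}$ of the containing half-arcs are arcs of length $1$, i.e.\ all of $\mbf T$; so the sumsets $S_\epsilon+S_{\epsilon'}$ from different parity classes are in general \emph{not} separated into distinct cosets or otherwise disjoint regions. A concrete example with $k=1$: for $A=[1/4,3/4]$ and $S=A$ one gets $S_0+S_0=[1/2,1]$ (parity $0$) and $S_0+S_1=[3/4,1]\cup[0,1/4]$ (parity $1$), which overlap on a set of positive measure. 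Without some genuine disjointness (or another mechanism) there is no way to add up the $4^k$ pairwise bounds, and the naive pigeonhole indeed loses exactly the factor $2^k$ you need. So as written this is a gap, not a proof: the ``technical heart'' you defer is the entire content of the lemma. It is also telling that your argument never uses any one-dimensional additive input on $\mbf T$, whereas already the case $k=1$ of the lemma \emph{is} Raikov's inequality $\mu(A+B)\ge\min(1,\mu(A)+\mu(B))$; some such ingredient is unavoidable to handle wraparound.

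For comparison, the paper's proof does not partition $S$ at all. It defines, for each coordinate $i$, a compression $\pi_i$ that replaces every fiber of $S$ in direction $i$ by an interval $[0,\text{(fiber measure)}]$; this preserves $\mu(S)$, and Raikov's inequality on $\mbf T$ shows $\mu(\pi_i(S)+\pi_i(S))\le\mu(S+S)$ (one checks $\pi_i(S)+\pi_i(S)\subseteq\pi_i(S+S)$ fiberwise). Since $S\subseteq A_1\times\cdots\times A_k$ and compressions are monotone, after compressing in all $k$ coordinates the set lies in $[0,\mu(A_1)]\times\cdots\times[0,\mu(A_k)]\subseteq[0,1/2]^k$, where there is no wraparound and a single application of Brunn--Minkowski in $\mbf R^k$ gives the factor $2^k$. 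If you want to salvage your outline, you would need to replace the parity/coset heuristic by an argument of this kind; the half-arc decomposition by itself does not provide the overlap control you are hoping for.
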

In the case where $A_1,\dots,A_k=[0,1/2]$, the sumset $S+S$ has no ``wraparound'' and therefore \cref{lem:frieman-check} follows from the Brunn--Minkowski inequality (see e.g.\ \cite[Theorem~3.16]{TVbook}). The general result (proven in the next section) reduces to this case via compressions.

We are now ready to prove \cref{prop:relative}.

\begin{proof}[Proof of \cref{prop:relative}]
As in the statement of the proposition, let $p\in (0,1/32]$ and consider an $(n-k)$-dimensional linear subspace $V\subseteq \mbf{Q}^n$. Let $L$ be a $k\times n$ matrix whose rows form a basis of the orthogonal complement of $V$. By permuting columns (which does not change the probability of the underlying event), we may assume that the first $k\times k$ block of $L$ is nonsingular. By performing row operations we may assume that this $k$ by $k$ block is diagonal. By rescaling the rows of $L$ we may assume that all the entries of $L$ are integral and furthermore that the diagonal entries are equal to some integer $Z$ (but note that we have no control over the size of $Z$). Finally we let $w_1,\ldots,w_n$ denote the columns of $L$.

Recall that $\mbf{T}^k$ is the $k$-dimensional torus $(\mbf{R}/\mbf{Z})^k$. For a random vector $X\sim \mu_{1/2}^{\otimes n}$ we have (by the Fourier inversion formula)
\begin{align*}
\on{sup}_{t\in \mbf{Q}^n}\mbf{P}[X+t\in V]&=\on{sup}_{t'\in \mbf{Z}^k}\mbf{P}[LX = t']\\
&=\on{sup}_{t'\in \mbf{Z}^k}\int_{\mbf{T}^k}\mbf{E}[\exp(2\pi i\theta^{T}(LX-t'))]\,d\theta\\
&\le \int_{\mbf{T}^k}\Big|\mbf{E}[\exp(2\pi i\theta^{T}LX)]\Big|\,d\theta\\
&= \int_{\mbf{T}^k}\prod_{j=1}^{n}\Big|\frac{1}{2}\exp(2\pi i\theta^{T}w_j)+\frac{1}{2}\exp(-2\pi i\theta^{T}w_j)\Big|\,d\theta\\
&= \int_{\mbf{T}^k}\prod_{j=1}^{n}|\cos(2\pi\theta^{T}w_j)|\,d\theta\\
&= \int_{\mbf{T}^k}\prod_{j=1}^{n}|\cos(\pi\theta^{T}w_j)|\,d\theta.
\end{align*}
In the final line, we have applied the change of variable $\theta \to \theta/2$ and noted that $|\cos(\theta+\pi)| = |\cos(\theta)|$ to rewrite the integral.

Via a similar computation, for a random vector $Y\sim \mu_{p}^{\otimes n}$ we have
\begin{align*}
\mbf{P}[Y\in V] =\mbf{P}[LY = 0]= \int_{\mbf{T}^k}\mbf{E}[\exp(2\pi i\theta^{T}LY)]\,d\theta =\int_{\mbf{T}^k}\prod_{j=1}^{n}(1-2p + 2p\cos(2\pi\theta^{T}w_j))\,d\theta.
\end{align*}
Note that $1-2p + 2p\cos(\varphi)\ge 1-4p\ge 0$ for all $\varphi\in \mbf{R}$, so all factors in this integral are nonnegative everywhere. Thus, it suffices to show that
\begin{equation}\label{eq:to-show-Fourier}
    \int_{\mbf{T}^k}\prod_{j=1}^{n}|\cos(\pi\theta^{T}w_j)|\,d\theta\le \gamma^k\cdot \int_{\mbf{T}^k}\prod_{j=1}^{n}(1-2p + 2p\cos(2\pi\theta^{T}w_j))\,d\theta
\end{equation}
for some absolute constant $\gamma<1$. To this end, we use the elementary trigonometric inequality (which is essentially \cite[Lemma~7.1~(19)]{TV06}), whose proof can be found at the end of this section.

\begin{lemma}\label{lem:trig-iden}For $p\in (0,1/32]$, and any $\varphi,\varphi'\in \mbf R$, we have
\[|\cos(\varphi)|\cdot |\cos(\varphi')|\le (1-2p + 2p\cos(2\varphi+2\varphi'))^{2}.\]
\end{lemma}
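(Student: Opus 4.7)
The plan is to reduce the two-variable trigonometric inequality to a single-variable one via the product-to-sum formula, and then verify the reduced inequality by an elementary algebraic check.

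First I would exploit periodicity: since $|\cos|$ has period $\pi$, and shifting $\varphi$ or $\varphi'$ by an integer multiple of $\pi$ changes $2\varphi+2\varphi'$ by a multiple of $2\pi$ (hence preserves $\cos(2\varphi+2\varphi')$), I may assume $\varphi,\varphi' \in [-\pi/2,\pi/2]$. In this range $\cos\varphi,\cos\varphi' \ge 0$, so the absolute values on the left-hand side can be dropped. Applying the product-to-sum identity and bounding $\cos(\varphi-\varphi') \le 1$, I obtain
\[
|\cos\varphi|\cdot|\cos\varphi'| = \tfrac{1}{2}\bigl(\cos(\varphi+\varphi') + \cos(\varphi-\varphi')\bigr) \le \tfrac{1 + \cos(\varphi+\varphi')}{2} = \cos^2\bigl(\tfrac{\varphi+\varphi'}{2}\bigr).
\]
Rewriting the right-hand side of the lemma via $1 - 2p + 2p\cos(2s) = 1 - 4p\sin^2 s$ and setting $s = \varphi+\varphi' \in [-\pi,\pi]$, it therefore suffices to prove the single-variable inequality $\cos^2(s/2) \le (1 - 4p\sin^2 s)^2$ for all $s \in [-\pi,\pi]$.

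Next I would finish the reduced inequality by algebra. Both sides are nonnegative (the right side since $4p \le 1/8 < 1$, the left since $s/2 \in [-\pi/2,\pi/2]$ gives $\cos(s/2) \ge 0$), so I can take square roots to get the equivalent $\cos(s/2) \le 1 - 4p\sin^2 s$. Using $\sin s = 2\sin(s/2)\cos(s/2)$ and substituting $t = \cos(s/2) \in [0,1]$ (so $\sin^2(s/2) = 1-t^2$), the inequality becomes $t \le 1 - 16p\, t^2(1-t^2)$, which rearranges to $16p\, t^2(1-t)(1+t) \le 1-t$. The case $t = 1$ is trivial; for $t < 1$, dividing by $1-t > 0$ gives $16p\, t^2(1+t) \le 1$. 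Since $t^2(1+t) \le 2$ on $[0,1]$, this reduces to $32p \le 1$, which is exactly the hypothesis $p \le 1/32$.

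I don't anticipate a significant obstacle once the right reduction is in place; the only real care needed is bookkeeping the domain of $s$ so that taking square roots is legitimate. As a sanity check on the form of the hypothesis, the constant $1/32$ is sharp for this method: Taylor-expanding near $s = 0$ gives $\cos^2(s/2) = 1 - s^2/4 + O(s^4)$ while $(1-4p\sin^2 s)^2 = 1 - 8ps^2 + O(s^4)$, and these match to leading order precisely when $p = 1/32$.
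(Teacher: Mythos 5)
Your proof is correct and follows essentially the same route as the paper: reduce by $\pi$-periodicity to $\varphi,\varphi'\in[-\pi/2,\pi/2]$, bound $\cos\varphi\cos\varphi'$ by $\cos^2\bigl(\tfrac{\varphi+\varphi'}{2}\bigr)$, and then verify a one-variable polynomial inequality using $p\le 1/32$. The only cosmetic differences are that you obtain the intermediate bound via the product-to-sum identity rather than concavity of $\log\cos$ (Jensen), and you parametrize the final algebraic check by $t=\cos(s/2)$ with the $1-4p\sin^2 s$ rewriting instead of the paper's substitution $x=\cos\bigl(\tfrac{\varphi+\varphi'}{2}\bigr)$; both checks are equally valid.
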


Let us now fix a small absolute constant $\beta>0$ (small enough to satisfy certain inequalities later in the proof), and define $\tau=e^{-\beta k}$. Note that we can rewrite the left hand side of \cref{eq:to-show-Fourier} as
\[\int_{\mbf{T}^k}\prod_{j=1}^{n}|\cos(\pi\theta^{T}w_j)|\,d\theta=\int_{\mbf{T}^k}\min\Big(\prod_{j=1}^{n}|\cos(\pi\theta^{T}w_j)|,\tau\Big)\,d\theta+ \int_{\mbf{T}^k}\max\Big(\prod_{j=1}^{n}|\cos(\pi\theta^{T}w_j)|-\tau,0\Big)\,d\theta.\]

To bound the first summand, note that \cref{lem:trig-iden} (applied, for $j=1,\dots,n$, to $\varphi=\pi\theta^{T}w_j$ and $\varphi'=0$) yields
\[\prod_{j=1}^{n}|\cos(\pi\theta^{T}w_j)|\le \prod_{j=1}^{n}(1-2p + 2p\cos(2\pi\theta^{T}w_j))^2\]
for all $\theta\in \mbf{T}^k$. Therefore, we obtain the bound
\[\int_{\mbf{T}^k}\!\min\Big(\prod_{j=1}^{n}|\cos(\pi\theta^{T}w_j)|,\tau\Big)\,d\theta\le \int_{\mbf{T}^k}\!\tau^{1/2}\prod_{j=1}^{n}|\cos(\pi \theta^{T}w_j)|^{1/2}\,d\theta\le \tau^{1/2}\!\int_{\mbf{T}^k}\prod_{j=1}^{n}(1-2p + 2p\cos(2\pi\theta^{T}w_j))\,d\theta\]
for the first summand. To handle the second summand, we define
\[S_{\eta}:=\Big\{\theta\in \mbf{T}^k : \prod_{j=1}^{n}|\cos(\pi\theta^{T}w_j)|\ge  \eta\Big\}\]
for any $\eta\in [\tau,1]$. Then we have
\[\int_{\mbf{T}^k}\max\Big(\prod_{j=1}^{n}|\cos(\pi\theta^{T}w_j)|-\tau,0\Big)\,d\theta=\int_\tau^1 \mu(S_{\eta})\,d\eta.\]

Recall that the first $k$ columns of $L$ form the $k\times k$ matrix  $ZI_k$ (i.e., a diagonal matrix with $Z$ everywhere on the diagonal). Therefore each $\theta\in S_\eta\subseteq \mbf{T}^k$ satisfies
\[
\prod_{i=1}^{k}|\cos(\pi Z\theta_i)| =\prod_{j=1}^{k}|\cos(\pi\theta^{T}w_j)|\ge \prod_{j=1}^{n}|\cos(\pi\theta^{T}w_j)|\ge \eta \ge \tau=e^{-\beta k}.\]
Note that whenever $Z\theta_i\notin \mbf{Z} + [-1/4,1/4]$, we have $|\cos(\pi Z\theta_i)|\le 2^{-1/2}$. Therefore, for any $\theta\in S_\eta$, there are at most $M:=\lfloor 2\beta k/\log(2)\rfloor$ coordinates $i\in [k]$ with $Z\theta_i\notin \mbf{Z} + [-1/4,1/4]$. For each subset $I\subseteq [k]$ of size $|I|\le M$, let $B_I\subseteq \mbf{T}^k$ denote the ``box'' of all points $\theta\in \mbf{T}^k$ with $Z\theta_i\notin \mbf{Z} + [-1/4,1/4]$ for all indices $i\in I$ and $Z\theta_i\in \mbf{Z} + [-1/4,1/4]$ for all indices $i\in [k]\setminus I$. Then for each $\eta\in [\tau,1]$, we have $S_\eta\subseteq \bigcup_I B_I$, where the union is taken over all subsets $I\subseteq [k]$ of size $|I|\le M$. The number of such subsets $I$ is
\[\sum_{j=0}^M\binom{k}{j}\le k \cdot (ek/M)^{M}\le (3/2)^{k},\]
provided that the absolute constant $\beta>0$ was chosen to be sufficiently small in the beginning of the proof (and provided that $k$ is larger than a suitable absolute constant). Thus, by the pigeonhole principle, there exists a subset $I\subseteq [k]$ such that $\mu(S_{\eta}\cap B_I)\ge (2/3)^{k} \mu(S_{\eta})$. Now, the set $S_{\eta}\cap B_I$ satisfies the assumption of \cref{lem:frieman-check}, and applying the lemma we obtain
 \[\mu(S_{\eta}+S_{\eta})\ge \mu((S_{\eta}\cap B_I) + (S_{\eta}\cap B_I))\ge 2^{k} \mu(S_{\eta}\cap B_I)\ge (4/3)^{k} \cdot \mu(S_{\eta}).\]

Now, for any $\theta,\theta'\in S_{\eta}$, by \cref{lem:trig-iden}, we have
\[\prod_{j=1}^{n}(1-2p+ 2p\cos(2\pi(\theta+\theta')^{T}w_j))\ge \prod_{j=1}^{n} |\cos(\pi\theta^{T}w_j)|^{1/2}\cdot  |\cos(\pi\theta'^{T}w_j)|^{1/2}\ge \eta^{1/2}\cdot \eta^{1/2}= \eta.\]
Thus, we can conclude that
\[\mu(S_\eta)\le (3/4)^k \cdot\mu(S_\eta+S_\eta)\le (3/4)^k\cdot \mu\Big(\Big\{\theta\in \mbf{T}^k : \prod_{j=1}^{n}(1-2p + 2p\cos(2\pi\theta^{T}w_j))\ge  \eta\Big\}\Big)\]
for all $\eta\in [\tau,1]$. Integrating this over the interval $[\tau,1]$ yields 
\[\int_{\mbf{T}^k}\max\Big(\prod_{j=1}^{n}|\cos(\pi\theta^{T}w_j)|-\tau,0\Big)\,d\theta=\int_\tau^1 \mu(S_{\eta})\,d\eta\le (3/4)^k\cdot \int_{\mbf{T}^k} \prod_{j=1}^{n}(1-2p + 2p\cos(2\pi\theta^{T}w_j))\,d\theta.\]

All in all, we can conclude
\[\int_{\mbf{T}^k} \prod_{j=1}^{n}|\cos(\pi \theta^{T}w_j)|\,d\theta \le (\tau^{1/2} + (3/4)^{k}) \cdot \int_{\mbf{T}^k} \prod_{j=1}^{n}(1-2p + 2p\cos(2\pi  \theta^{T}w_j))\,d\theta.\]
This shows the desired inequality \cref{eq:to-show-Fourier}, setting $\gamma=e^{-\beta/4}$, and observing that then we have $\tau^{1/2} + (3/4)^{k}\le e^{-\beta k/2}+ (3/4)^{k}\le \gamma^k$ (assuming that $\beta$ was chosen to be sufficiently small and $k$ is sufficiently large).
\end{proof}

We end this section with the proof of \cref{lem:trig-iden}, and postpone the proof of \cref{lem:frieman-check} to the next section.

\begin{proof}[Proof of \cref{lem:trig-iden}]
First, note that both sides of the inequality are $\pi$-periodic, so we may assume without loss of generality that $\varphi,\varphi'\in [-\pi/2,\pi/2]$. Since $\partial^2/\partial\varphi^2 \log(\cos(\varphi)) = -\cos(\varphi)^{-2}\le 0$ for $\varphi\in (-\pi/2,\pi/2)$, the function $\log \cos(\varphi)$ is concave on $(-\pi/2,\pi/2)$ and therefore by Jensen's inequality we have
\[|\cos(\varphi)|\cdot |\cos(\varphi')|=\cos(\varphi)\cdot \cos(\varphi')\le (\cos(\varphi/2 + \varphi'/2))^2. \]
Noting that $\cos(\varphi/2 + \varphi'/2)\ge 0$ (since $\varphi/2 + \varphi'/2\in [-\pi/2,\pi/2]$), and furthermore $1-2p+2p\cos(2\varphi+2\varphi')\ge 15/16+(1/16)\cdot \cos(2\varphi+2\varphi')$ (since $p\le 1/32$), it now suffices to prove that
\[\cos(\varphi/2 + \varphi'/2)\le \frac{15}{16}+\frac{1}{16}\cdot \cos(2\varphi+2\varphi').\]
We define $x=\cos(\varphi/2 + \varphi'/2)$. Then, recalling that $\cos(2\alpha)=(\cos\alpha)^2-(\sin\alpha)^2=2(\cos\alpha)^2-1$ for all $\alpha\in \mbf R$, we have $\cos(2\varphi+2\varphi')=2(\cos(\varphi+\varphi'))^2-1=2(2x^2-1)^2-1=8x^4-8x^2+1$. Now, it suffices to check that
\[x\le x+\frac{(x-1)^2\cdot ((x+1)^2+1)}{2}=x+\frac{(x^2-2x+1)\cdot (x^2+2x+2)}{2}=\frac{x^4}{2}-\frac{x^2}{2}+1=\frac{15}{16}+
\frac{1}{16}\cdot(8x^4-8x^2+1)\]
to finish the proof of the lemma.
\end{proof}

\section{Proof of \cref{lem:frieman-check}}
We first recall an inequality due to Raikov \cite{Rai39}. This may also be derived from the Cauchy--Davenport inequality (see e.g.\ \cite[Theorem~5.4]{TVbook}) via a limiting argument.
\begin{theorem}\label{thm:kneser-mc}
Consider closed sets $A, B\subseteq \mbf{T}$. Then 
\[\mu(A+B)\ge \min(\mu(A) + \mu(B), 1).\]
\end{theorem}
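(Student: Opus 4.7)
The plan is to deduce this torus inequality from the classical Cauchy--Davenport theorem on $\mathbf{Z}/N\mathbf{Z}$ (for $N$ prime) by discretizing at scale $1/N$ and sending $N\to\infty$. The key subtlety is that a generic closed set in $\mathbf{T}$ need not be well-approximated from inside by finite unions of arcs (e.g.\ a fat Cantor set), so I would use an \emph{outer} discretization that converges to the correct measure for arbitrary closed sets.

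For a closed (hence compact) set $A\subseteq \mathbf{T}$ and a large prime $N$, I identify $\tfrac{1}{N}\mathbf{Z}/\mathbf{Z}$ with $\mathbf{Z}/N\mathbf{Z}$ and define
\[
A_N := \{\,k\in \mathbf{Z}/N\mathbf{Z} \,:\, [k/N,(k+1)/N]\cap A\neq \emptyset\,\},
\]
and $B_N$ analogously. The first observation is that $|A_N|/N\to \mu(A)$ as $N\to\infty$: indeed, $A\subseteq \bigcup_{k\in A_N}[k/N,(k+1)/N]\subseteq A+[-1/N,1/N]$, so $\mu(A)\le |A_N|/N\le \mu(A+[-1/N,1/N])$, and the right-hand side decreases to $\mu(A)$ by continuity of measure (using $A=\bigcap_{\delta>0}(A+[-\delta,\delta])$ since $A$ is closed). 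Likewise $|B_N|/N\to \mu(B)$.

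Next, I would invoke Cauchy--Davenport to get $|A_N+B_N|\ge \min(|A_N|+|B_N|-1,\,N)$. For any $k\in A_N$ and $\ell\in B_N$, choose $a\in A\cap [k/N,(k+1)/N]$ and $b\in B\cap [\ell/N,(\ell+1)/N]$; then $(k+\ell)/N = a+b-s$ for some $s\in [0,2/N]$, so $(k+\ell)/N\in (A+B)+[-2/N,0]$. Hence for each $m\in A_N+B_N$, the closed interval $[m/N,(m+1)/N]$ lies in $(A+B)+[-2/N,1/N]$, and these intervals are pairwise essentially disjoint as $m$ ranges over $\mathbf{Z}/N\mathbf{Z}$, yielding
\[
\mu\bigl((A+B)+[-2/N,1/N]\bigr)\;\ge\;\frac{|A_N+B_N|}{N}\;\ge\;\min\!\Big(\frac{|A_N|+|B_N|-1}{N},\,1\Big).
\]

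Sending $N\to \infty$: the left-hand side tends to $\mu(A+B)$ (since $A+B$ is itself compact, being the image of $A\times B$ under the continuous addition map, so its $O(1/N)$-neighborhoods decrease to it), while the right-hand side tends to $\min(\mu(A)+\mu(B),1)$, giving the desired bound. The main obstacle—namely, ensuring $|A_N|/N\to \mu(A)$ for arbitrary closed $A$ without any assumption of Jordan-regularity—is precisely what the outer discretization handles through the sandwich estimate above.
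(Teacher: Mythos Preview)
Your argument is correct and is precisely the ``limiting argument'' from Cauchy--Davenport that the paper alludes to; the paper itself does not supply a proof of this theorem, only citing Raikov and remarking that such a derivation exists. Your care with the outer discretization (so that $|A_N|/N\to\mu(A)$ for \emph{arbitrary} closed $A$, via the sandwich $\mu(A)\le |A_N|/N\le \mu(A+[-1/N,1/N])$) is exactly what is needed to make that remark rigorous.
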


We next define the \emph{compression} of a closed set $S\subseteq \mbf{T}^k$ in the $i$-th coordinate direction, for $i\in [k]$: For $(\theta_1,\ldots,\theta_k)\in \mbf T^k$, where  $\theta_1,\ldots,\theta_k\in [0,1)$, let us say that $(\theta_1,\ldots,\theta_k)\in \pi_{i}(S)$ if and only if
\[\theta_{i}\le  \int_{\mbf{T}}\mbf{1}_{(\theta_1,\ldots,\theta_{i-1},z,\theta_{i+1},\ldots,\theta_{k})\in S} \,dz.\]

We observe some properties which are immediate by construction.
\begin{fact}\label{fct:comp}
Consider any $i\in [k]$. Then for any closed set $S\subseteq \mbf{T}^k$, the set $\pi_i(S)$ is closed, and we have $\mu(\pi_i(S))=\mu(S)$. Furthermore, for any closed sets $S\subseteq S'\subseteq \mbf{T}^k$, we have $\pi_i(S) \subseteq \pi_i(S')$. 
\end{fact}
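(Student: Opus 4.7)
The plan is to introduce the \emph{slice-measure function} $f : \mbf{T}^{k-1} \to [0,1]$, writing $\theta_{-i}$ as shorthand for $(\theta_1,\dots,\theta_{i-1},\theta_{i+1},\dots,\theta_k)$, defined by
\[
f(\theta_{-i}) := \int_{\mbf{T}} \mbf{1}_{(\theta_1,\dots,\theta_{i-1},z,\theta_{i+1},\dots,\theta_k)\in S}\, dz,
\]
so that by definition $\pi_i(S) = \{\theta \in \mbf{T}^k : \theta_i \le f(\theta_{-i})\}$ (using the representative $\theta_i\in[0,1)$). With $f$ in hand, the monotonicity claim is immediate: if $S \subseteq S'$ then pointwise $\mbf{1}_S \le \mbf{1}_{S'}$, hence $f_S \le f_{S'}$ and therefore $\pi_i(S) \subseteq \pi_i(S')$. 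The measure identity follows from two applications of Fubini's theorem: one expresses $\mu(S) = \int_{\mbf{T}^{k-1}} f(\theta_{-i})\, d\theta_{-i}$, and the other gives $\mu(\pi_i(S)) = \int_{\mbf{T}^{k-1}}\int_0^1 \mbf{1}_{\theta_i \le f(\theta_{-i})}\, d\theta_i\, d\theta_{-i} = \int_{\mbf{T}^{k-1}} f(\theta_{-i})\, d\theta_{-i}$ (using that $f \in [0,1]$).

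The only substantive point is closedness of $\pi_i(S)$, for which the key step is showing that $f$ is upper semicontinuous on $\mbf{T}^{k-1}$. Since $S$ is closed, $\mbf{1}_S$ is upper semicontinuous on $\mbf{T}^k$, so for any convergent sequence $\theta_{-i}^{(n)} \to \theta_{-i}$ we have $\limsup_n \mbf{1}_S(\dots, z, \dots) \le \mbf{1}_S(\dots, z, \dots)$ pointwise in $z$. Because the integrand is dominated by the constant function $1$ on the finite measure space $\mbf{T}$, the reverse Fatou lemma swaps $\limsup$ past the $z$-integral, yielding $\limsup_n f(\theta_{-i}^{(n)}) \le f(\theta_{-i})$, which is exactly upper semicontinuity of $f$.

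To deduce closedness of $\pi_i(S)$, take any sequence $\theta^{(n)} \in \pi_i(S)$ with $\theta^{(n)} \to \theta$ in $\mbf{T}^k$, and case-split on the value of the representative $\theta_i \in [0,1)$. If $\theta_i > 0$, the representatives $\theta_i^{(n)}$ lie in the interior of $[0,1)$ for large $n$ and converge to $\theta_i$ in the Euclidean sense; combining this with upper semicontinuity of $f$ lets the inequality $\theta_i^{(n)} \le f(\theta_{-i}^{(n)})$ pass to the limit to give $\theta_i \le f(\theta_{-i})$. If $\theta_i = 0$, the condition $\theta_i \le f(\theta_{-i})$ holds trivially since $f \ge 0$. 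In both cases $\theta \in \pi_i(S)$. The only potential pitfall, and thus the main obstacle (such as it is), is precisely the torus wrap-around that motivates this small case split; apart from that, the proof is a routine packaging of Fubini with elementary semicontinuity.
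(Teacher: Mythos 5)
Your proof is correct. The paper in fact offers no proof of this statement (it is declared ``immediate by construction''), and your writeup simply supplies the intended routine details: monotonicity of the slice-measure function $f$, the measure identity via Fubini--Tonelli, and closedness via upper semicontinuity of $f$ (reverse Fatou applied to the upper semicontinuous indicator $\mbf{1}_S$), with the small case split at $\theta_i=0$ correctly disposing of the only delicate point, the torus wrap-around.
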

The crucial property is that $\pi_{i}(S)$ has smaller sumset than $S$. This is a continuous analogue of certain standard facts about compressions of discrete sets.
\begin{lemma}\label{lem:compression}
Consider any closed set $S\subseteq \mbf{T}^k$. We have that 
\[\mu(\pi_i(S) + \pi_i(S))\le \mu(S + S).\]
\end{lemma}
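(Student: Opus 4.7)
The plan is to reduce to the case $i=k$ by relabeling coordinates, and then prove the inequality fiberwise by slicing along the last coordinate and applying Raikov's theorem (\cref{thm:kneser-mc}).

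First, for $y \in \mbf{T}^{k-1}$, write $S_y := \{\theta \in \mbf{T} : (y,\theta) \in S\}$ for the $y$-fiber of $S$, and similarly $(S+S)_y$ for the $y$-fiber of $S+S$. By Fubini's theorem $\mu(S+S) = \int_{\mbf{T}^{k-1}} \mu((S+S)_y)\,dy$, and likewise for $\pi_k(S)+\pi_k(S)$. So it suffices to show the pointwise comparison
\[\mu\bigl((\pi_k(S)+\pi_k(S))_y\bigr)\;\le\;\mu\bigl((S+S)_y\bigr)\qquad\text{for a.e.\ } y \in \mbf{T}^{k-1}.\]

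Next I would compute both sides in terms of the fiber measures $f(y):=\mu(S_y)$. From the definition of $\pi_k$, the $y$-fiber of $\pi_k(S)$ is exactly the interval $[0, f(y)] \subseteq \mbf{T}$ (identifying $\mbf{T}$ with $[0,1)$). Therefore the $y$-fiber of $\pi_k(S)+\pi_k(S)$ is
\[\bigcup_{y_1\in\mbf{T}^{k-1}}\bigl([0,f(y_1)]+[0,f(y-y_1)]\bigr),\]
where each summand is either the interval $[0,f(y_1)+f(y-y_1)]$ (if this does not exceed $1$) or all of $\mbf{T}$ (else). Taking a union of nested intervals based at $0$, its Lebesgue measure equals
\[\min\Bigl(1,\;\sup_{y_1\in\mbf{T}^{k-1}}\bigl(f(y_1)+f(y-y_1)\bigr)\Bigr).\]
On the other hand, for every choice of $y_1$, we have the inclusion $S_{y_1}+S_{y-y_1}\subseteq (S+S)_y$, so by Raikov's inequality (\cref{thm:kneser-mc}),
\[\mu\bigl((S+S)_y\bigr)\;\ge\;\mu(S_{y_1}+S_{y-y_1})\;\ge\;\min\bigl(1,\,f(y_1)+f(y-y_1)\bigr).\]
Taking the supremum over $y_1$ and pulling $\min(1,\cdot)$ outside (valid since $t\mapsto\min(1,t)$ is monotone nondecreasing), we get exactly the quantity computed above for the compressed set, giving the desired fiberwise inequality.

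Integrating over $y\in \mbf{T}^{k-1}$ completes the proof. The routine check to carry out carefully is measurability: since $S$ is closed, the function $y\mapsto f(y)=\mu(S_y)$ is measurable (indeed upper semicontinuous), and hence so is $y\mapsto \mu((\pi_k(S)+\pi_k(S))_y)$; a similar check shows $(S+S)_y$ is measurable in $y$ using that $S+S$ is the continuous image of the compact set $S\times S$ and thus itself measurable (in fact closed, since $S$ is compact). I would expect the mild measurability bookkeeping and the manipulation $\sup_{y_1} \min(1, f(y_1)+f(y-y_1)) = \min(1, \sup_{y_1}(f(y_1)+f(y-y_1)))$ to be the only fiddly points; the conceptual core is just one application of Raikov's theorem on each fiber.
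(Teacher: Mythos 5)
Your proof is correct and is essentially the paper's own argument: after reducing to $i=k$, both proofs apply Raikov's theorem (\cref{thm:kneser-mc}) fiberwise, using $S_{y_1}+S_{y-y_1}\subseteq (S+S)_y$ together with the fact that each fiber of $\pi_k(S)$ is the interval $[0,\mu(S_y)]$. The only cosmetic difference is packaging: the paper expresses the fiberwise bound as the set inclusion $\pi_k(S)+\pi_k(S)\subseteq \pi_k(S+S)$ and then uses measure-preservation of $\pi_k$, whereas you integrate the fiber measures directly via Fubini.
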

\begin{proof}
By symmetry, it suffices to prove the case when $i = k$. By \cref{fct:comp}, observe it suffices to show that $\pi_k(S)+\pi_k(S)\subseteq \pi_k(S+S)$. So, fix any $\theta,\psi\in \pi_k(S)$; we will prove that $\theta+\psi\in \pi_k(S+S)$.

First, it is convenient to introduce some notation: given a set $U\subseteq \mbf{T}^k$ and $\chi\in \mbf{T}^k$, let us define $U_{\chi} = \{z\in \mbf{T}: (\chi_1,\dots,\chi_{k-1}, z)\in U\}$. Note that $U_{\chi}$ is independent of the $k$-th coordinate of $\chi$.

Now, note that $\mu(S_\theta)\ge \theta_k$ and $\mu(S_\psi)\ge \psi_k$. By \cref{thm:kneser-mc} and the inclusion $(S+S)_{\theta+\psi}\subseteq S_\theta+S_\psi$, we have that $\mu((S+S)_{\theta+\psi})\ge \mu(S_\theta+S_\psi)\ge \min (1,\theta_k+\psi_k)$. This implies that $\theta_k+\psi_k \imod 1\in (\pi_k(S+S))_{\theta+\psi}$. Therefore $\theta+\psi\in\pi_k(S+S)$, which completes the proof.
\end{proof}

\begin{lemma}\label{lem:container}
Let $A_1,\dots,A_k\subseteq \mbf{T}$ be closed sets. Then $\pi_1 \circ \dots \circ \pi_k(A_1 \times \dots \times A_k)= [0,\mu(A_1)] \times \dots \times [0,\mu(A_k)]$.
\end{lemma}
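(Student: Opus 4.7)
My plan is to prove this by iterating through the compressions in the order $\pi_k, \pi_{k-1}, \dots, \pi_1$, using the key fact that each $\pi_i$ applied to a product set returns another product set (in which the $i$-th factor is replaced by $[0, \mu(B_i)]$). Accordingly, the first step would be to establish the following sublemma: for any closed sets $B_1, \dots, B_k \subseteq \mbf{T}$ and any $i \in [k]$,
\[
\pi_i(B_1 \times \dots \times B_k) = B_1 \times \dots \times B_{i-1} \times [0, \mu(B_i)] \times B_{i+1} \times \dots \times B_k,
\]
understood up to a null set (as discussed below).

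To verify this sublemma I would simply unwind the definition of $\pi_i$. The $i$-th fiber of a product set $B_1 \times \dots \times B_k$ at $(\theta_1, \dots, \widehat{\theta_i}, \dots, \theta_k)$ is either $B_i$ (when $\theta_j \in B_j$ for every $j \ne i$) or empty, so the integral in the definition of $\pi_i$ is either $\mu(B_i)$ or $0$. Thus the defining inequality $\theta_i \le \int \cdots \, dz$ reduces to $\theta_i \in [0, \mu(B_i)]$ inside the product region $\prod_{j \ne i} B_j$, precisely giving the claimed product, and to $\theta_i = 0$ outside of it (a null set).

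With the sublemma in hand, I would finish the proof by iteration: $\pi_k(A_1 \times \dots \times A_k)$ becomes $A_1 \times \dots \times A_{k-1} \times [0, \mu(A_k)]$, which is again a product of closed sets in $\mbf{T}$, so the sublemma applies again with $i = k-1$ to replace the $(k-1)$-th factor by $[0, \mu(A_{k-1})]$. Continuing in this manner through $\pi_1$ yields $[0, \mu(A_1)] \times \dots \times [0, \mu(A_k)]$, as desired.

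The main obstacle is the measure-zero discrepancy in the sublemma, coming from points with $\theta_i = 0$ whose other coordinates fall outside the product region. These null sets accumulate as one iterates, so the equality stated in the lemma is strictly speaking an equality modulo null sets. This should be sufficient for the downstream application in the proof of \cref{lem:frieman-check}, since what is truly needed there is that the fully compressed set $\pi_1 \circ \dots \circ \pi_k(S)$ sits inside $[0, 1/2]^k$ up to a null set, enabling a Brunn--Minkowski argument without sumset wraparound on the torus.
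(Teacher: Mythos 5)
Your proof is essentially the same as the paper's: the identical sublemma that $\pi_\ell$ turns a product $B_1\times\cdots\times B_k$ of closed sets into the product with the $\ell$-th factor replaced by $[0,\mu(B_\ell)]$, iterated in the order $\pi_k,\dots,\pi_1$. Your null-set caveat is in fact warranted by the paper's literal definition of $\pi_i$ (an empty fiber gives the condition $\theta_i\le 0$, which still admits $\theta_i=0$), a point the paper's proof glosses over when it asserts exact equality; and, as you say, the resulting equality up to null sets is all that \cref{lem:frieman-check} needs, provided you also record (via the monotonicity in \cref{fct:comp} plus Fubini) that changing the input by a null set changes each subsequent compression by at most a null set, so the accumulated errors stay of measure zero.
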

\begin{proof}
Note that for arbitrary closed sets $B_1,\dots,B_k\subset \mbf{T}$, we have
\[\pi_\ell(B_1 \times \dots \times B_k) = B_1 \times \dots \times B_{\ell-1} \times [0,\mu(B_{\ell})] \times B_{\ell + 1} \times \dots \times B_k.\]
Whence by a simple inductive argument, we have that 
\[\pi_{\ell} \circ \cdots \circ \pi_{k}(A_1 \times \dots \times A_k) = A_1 \times \dots \times A_{\ell-1} \times [0,\mu(A_{\ell})] \times \cdots \times [0,\mu(A_{k})].\]
The desired result follows by the case $\ell = 1$.
\end{proof}

We now give the proof of \cref{lem:frieman-check}. 
\begin{proof}[{Proof of \cref{lem:frieman-check}}]
By \cref{lem:compression} and \cref{fct:comp} (both applied $k$ times), it suffices to prove that 
\[\mu(\pi_1\circ \cdots \circ \pi_k(S) + \pi_1\circ \cdots \circ \pi_k(S))\ge 2^{k}\mu(\pi_1\circ \cdots \circ \pi_k(S)).\]
Note by \cref{lem:container} and the second part of \cref{fct:comp}, we have that 
\[\pi_1 \circ \cdots \circ \pi_k(S)\subseteq [0,1/2]^{k}.\]
Therefore we may identify $\pi_1\circ \cdots \circ \pi_k(S)$ as a subset of $\mbf{R}^{k}$ (i.e., there can be no ``wrap-around'' when forming $\pi_1\circ \cdots \circ \pi_k(S)+\pi_1\circ \cdots \circ \pi_k(S)$) and conclude with the Brunn--Minkowski inequality\footnote{Formally, there is a possibility of wraparound at the boundary of $[0,1/2]^k$, but the boundary has measure zero so this causes no problems. Specifically, we may consider $(\pi_1\circ \cdots \circ \pi_k(S))\cap [0,1/2)^{k}$ and only then apply the Brunn--Minkowski inequality in $\mbf{R}^{k}$, noting that $\mu((\pi_1\circ \cdots \circ \pi_k(S))\cap [0,1/2)^{k}) = \mu(\pi_1\circ \cdots \circ \pi_k(S))$.} (see e.g.\ \cite[Theorem~3.16]{TVbook}).
\end{proof}

\bibliographystyle{amsplain0}
\bibliography{main.bib}
\end{document}